\newtheorem{theorem}{Theorem}[section]
\newtheorem{lemma}[theorem]{Lemma}
\newtheorem{proposition}[theorem]{Proposition}
\theoremstyle{definition}
\newtheorem{remark}[theorem]{Remark}
\definecolor{A}{rgb}{.75,1,.75}
\newcommand{\gr}{\operatorname{gr}}
\numberwithin{equation}{section}
\begin{document}

\title[An isomorphism theorem]{An isomorphism theorem for degenerate cyclotomic Yokonuma-Hecke algebras and applications}
\author[Weideng Cui]{Weideng Cui}
\address{School of Mathematics, Shandong University, Jinan, Shandong 250100, P.R. China.}
\email{cwdeng@amss.ac.cn}

\begin{abstract}
Inspired by the work [PA], we establish an explicit algebra isomorphism between the degenerate cyclotomic Yokonuma-Hecke algebra $Y_{r,n}^{d}(q)$ and a direct sum of matrix algebras over tensor products of degenerate cyclotomic Hecke algebras of type $A$. We then develop several applications of this result, including a new proof of the modular representation theory of $Y_{r,n}^{d}(q)$, a semisimplicity criterion for it and cellularity of it. Moreover, we prove that $Y_{r,n}^{d}(q)$ is a symmetric algebra and determine the associated Schur elements by using the isomorphism theorem for it.
\end{abstract}



\maketitle
\medskip
\section{Introduction}
\subsection{}
Yokonuma-Hecke algebras were introduced by Yokonuma \cite{Yo} as a centralizer algebra associated to the permutation representation of a finite Chevalley group $G$ with respect to a maximal unipotent subgroup of $G$. The Yokonuma-Hecke algebra $Y_{r,n}(q)$ (of type $A$) is a quotient of the group algebra of the modular framed braid group $(\mathbb{Z}/r\mathbb{Z})\wr B_{n},$ where $B_{n}$ is the braid group on $n$ strands (of type $A$). By the presentation given by Juyumaya and Kannan \cite{Ju1, Ju2, JuK}, the Yokonuma-Hecke algebra $Y_{r,n}(q)$ can also be regraded as a deformation of the group algebra of the complex reflection group $G(r,1,n),$ which is isomorphic to the wreath product $(\mathbb{Z}/r\mathbb{Z})\wr \mathfrak{S}_{n}$. It is well-known that there exists another deformation of the group algebra of $G(r,1,n),$ namely the Ariki-Koike algebra \cite{AK}. The Yokonuma-Hecke algebra $Y_{r,n}(q)$ is quite different from the Ariki-Koike algebra.  For example, the Iwahori-Hecke algebra of type $A$ is canonically a subalgebra of the Ariki-Koike algebra, whereas it is an obvious quotient of $Y_{r,n}(q),$ but not an obvious subalgebra of it.

Recently, by generalizing the approach of Okounkov-Vershik \cite{OV} on the representation theory of the symmetric group $\mathfrak{S}_n$, Chlouveraki and Poulain d'Andecy \cite{ChPA1} introduced the notion of affine Yokonuma-Hecke algebra $\widehat{Y}_{r,n}(q)$ and gave explicit formulas for all irreducible representations of $Y_{r,n}(q)$ over $\mathbb{C}(q)$, and obtained a semisimplicity criterion for it. In their subsequent paper [ChPA2], they studied the representation theory of the affine Yokonuma-Hecke algebra $\widehat{Y}_{r,n}(q)$ and the cyclotomic Yokonuma-Hecke algebra $Y_{r,n}^{d}(q)$. In particular, they gave the classification of irreducible representations of $Y_{r,n}^{d}(q)$ in the generic semisimple case. In [CW], we gave the classification of the simple $\widehat{Y}_{r,n}(q)$-modules as well as the classification of the simple modules of the cyclotomic Yokonuma-Hecke algebras over an algebraically closed field $\mathbb{K}$ of characteristic $p$ such that $p$ does not divide $r.$ Rostam [Ro] proved that the cyclotomic Yokonuma-Hecke algebra is a particular case of cyclotomic quiver Hecke algebras. In the past several years, the study of affine and cyclotomic Yokonuma-Hecke algebras has made substantial progress; see [ChPA1, ChPA2, ChS, C1, C2, CW, ER, JaPA, Lu, PA, Ro].

\subsection{}
Schur elements play a powerful role in the representation theory of symmetric algebras (see, e.g., [GP, Chapter 7]). Malle and Mathas [MM] showed that the cyclotomic Hecke algebra (Ariki-Koike algebra) is a symmetric algebra. Formulae for the Schur elements of the cyclotomic Hecke algebra have been obtained independently, first by Geck, Iancu and Malle [GIM], and later by Mathas [M]; see also [ChJa].

In the case of the degenerate cyclotomic Hecke algebra, Brundan and Kleshchev [BK1] proved that it is a symmetric algebra for all parameters. Later on, Zhao [Z1-2] gave an explicit combinatorial formula for the Schur element of the degenerate cyclotomic Hecke algebra.

Thus, we can use the Schur elements to determine when Specht  modules are projective irreducible and whether the cyclotomic and degenerate cyclotomic Hecke algebra are semi-simple.

\subsection{}
Largely inspired by the work [PA], we establish an explicit algebra isomorphism between the degenerate cyclotomic Yokonuma-Hecke algebra $Y_{r,n}^{d}(q)$ and a direct sum of matrix algebras over tensor products of degenerate cyclotomic Hecke algebras of type $A$. We then develop several applications of this result, including a new proof of the modular representation theory of $Y_{r,n}^{d}(q)$, a semisimplicity criterion for it and cellularity of it. Moreover, we prove that $Y_{r,n}^{d}(q)$ is a symmetric algebra and determine the associated Schur elements by using the isomorphism theorem for it.

This paper is organized as follows. In Section 2, we recall some necessary results and prove that the degenerate cyclotomic Yokonuma-Hecke algebra is a symmetric algebra following [BK1, Appendix A]. In Section 3, we establish an explicit algebra isomorphism between the degenerate affine (resp. cyclotomic) Yokonuma-Hecke algebra $\widehat{Y}_{r,n}(q)$ (resp. $Y_{r,n}^{d}(q)$) and a direct sum of matrix algebras over tensor products of degenerate affine (resp. cyclotomic) Hecke algebras of type $A$ following the approach of Poulain d'Andecy. In Section 4, we develop several applications of the algebra isomorphism.

\section{Degenerate cyclotomic Yokonuma-Hecke algebras are symmetric}

In this section, we first recall the definition of the degenerate cyclotomic Yokonuma-Hecke algebra $Y_{r, n}^{d}$, and then prove that it is a symmetric algebra following the approach in [BK1, Appendix A].

\subsection{Degenerate cyclotomic Hecke algebras}
Let $n, d\in \mathbb{Z}_{\geq 1}$ and let $\mathbb{K}$ be an algebraically closed field of characteristic $p\geq 0$ which contains some elements $v_{1},\ldots, v_{d}.$ The degenerate affine Hecke algebra $\widehat{H}_{n}$ is the associative $\mathbb{K}$-algebra generated by the elements $\bar{x}_1,\ldots, \bar{x}_{n}$ and $\bar{s}_1,\ldots, \bar{s}_{n-1}$ subject to the following relations:
\begin{align}
\label{ECoxeter1}
\bar{s}_r^2&=1;\\
\label{ECoxeter2}
\bar{s}_r\bar{s}_{r+1}\bar{s}_r&=\bar{s}_{r+1}\bar{s}_r\bar{s}_{r+1},
\qquad
\bar{s}_r\bar{s}_t=\bar{s}_t\bar{s}_r\hspace{3mm} \text{if $|r-t|>1$};\\
\label{EPoly}
\bar{x}_r\bar{x}_t&=\bar{x}_t\bar{x}_r;
\\\label{EDAHA}
\bar{s}_r \bar{x}_{r+1} &= \bar{x}_r \bar{s}_r + 1,\hspace{10.5mm} \bar{s}_r \bar{x}_t = \bar{x}_t \bar{s}_r \hspace{1.5mm}
\text{ if $t \neq r,r+1$}.
\end{align}
By [K, Theorem 3.2.2], $\widehat{H}_{n}$ is a free $\mathbb{K}$-module with basis
\begin{equation}\label{Ahecke-basis}
\big\{\bar{x}_{1}^{\alpha_1}\cdots \bar{x}_{n}^{\alpha_{n}}\bar{w}\:|\:\alpha_1,\ldots,\alpha_{n}\geq 0, w\in \mathfrak{S}_{n}\big\}.
\end{equation}

The degenerate cyclotomic Hecke algebra $H_{n}^{d}$ is defined to be the quotient:\[H_{n}^{d}=\widehat{H}_{n}/\langle (\bar{x}_1-v_1)\cdots (\bar{x}_{1}-v_{d})\rangle.\]
By [K, Theorem 7.5.6], $H_{n}^{d}$ is a free $\mathbb{K}$-module with basis
\begin{equation}\label{Checke-basis}
\big\{\bar{x}_{1}^{i_1}\cdots \bar{x}_{n}^{i_{n}}\bar{w}\:|\:0\leq i_1,\ldots,i_{n}<d, w\in \mathfrak{S}_{n}\big\}.
\end{equation}

Let $\tau_n: H_{n}^{d}\rightarrow \mathbb{K}$ be the $\mathbb{K}$-linear map defined by
\begin{align}\label{symme-form}
\tau_n(\bar{x}_{1}^{i_1}\cdots \bar{x}_{n}^{i_{n}}\bar{w})=
\begin{cases}
1 & \text{if } i_{1}=\cdots=i_{n}=d-1\text{ and }w=1,
\\
0 & \text{otherwise. }
\end{cases}
\end{align}
By [BK1, Appendix], $\tau_n$ is a non-degenerate trace on $H_{n}^{d}$ for all parameters $v_{1},\ldots, v_{d}$ in $\mathbb{K}$, that is, $H_{n}^{d}$ is a symmetric algebra for all parameters.

\subsection{Degenerate cyclotomic Yokonuma-Hecke algebras}
In the rest of this paper, we shall assume that $p$ does not divide $r$ for some $r\in \mathbb{Z}_{\geq 1}$. The degenerate affine Yokonuma-Hecke algebra, denoted by $\widehat{Y}_{r,n}$, is an associative $\mathbb{K}$-algebra generated by the elements $t_{1},\ldots,t_{n},f_{1},\ldots,f_{n-1}, x_1,\ldots, x_{n}$ in which the generators $t_{1},\ldots,t_{n},f_{1},$ $\ldots,f_{n-1}$ satisfy the following relations:
\begin{align}
f_if_j&=f_jf_i \qquad \qquad\qquad\quad\hspace{0.3cm}\mbox{for all $i,j=1,\ldots,n-1$ such that $\vert i-j\vert \geq 2$;}\label{drel-def-Y1}\\[0.1em]
f_if_{i+1}f_i&=f_{i+1}f_if_{i+1} \qquad \quad\qquad\hspace{0.05cm}\mbox{for all $i=1,\ldots,n-2$;}\label{drel-def-Y2}\\[0.1em]
t_it_j&=t_jt_i \qquad\qquad\qquad\qquad  \mbox{for all $i,j=1,\ldots,n$;}\label{drel-def-Y3}\\[0.1em]
f_it_j&=t_{s_i(j)}f_i \quad \quad\qquad\qquad\hspace{0.25cm}\mbox{for all $i=1,\ldots,n-1$ and $j=1,\ldots,n$;}\label{drel-def-Y4}\\[0.1em]
t_i^r&=1 \quad \qquad\qquad\qquad\qquad\mbox{for all $i=1,\ldots,n$;}\label{drel-def-Y5}\\[0.2em]
f_{i}^{2}&=1 \qquad \hspace{2.71cm}\mbox{for all $i=1,\ldots,n-1$,}\label{drel-def-Y6}
\end{align}
together with the following relations concerning the generators $x_1,\ldots, x_{n}$:
\begin{align}
x_{i}x_{j}&=x_{j}x_{i};\label{drel-def-Y7}\\[0.1em]
f_{i}x_{i+1}&=x_{i}f_{i}+e_{i};\label{drel-def-Y8}\\[0.1em]
f_{i}x_{j}&=x_{j}f_{i} \qquad \qquad \quad\mbox{for all $j\neq i, i+1$;}\label{drel-def-Y9}\\[0.1em]
t_{j}x_{i}&=x_{i}t_{j} \hspace{0.07cm}\qquad \qquad\quad\mbox{for all $i, j=1,\ldots,n$,}\label{drel-def-Y10}
\end{align}
where $s_{i}$ is the transposition $(i,i+1)$, and for each $1\leq i\leq n-1$,
$$e_{i} :=\frac{1}{r}\sum\limits_{s=0}^{r-1}t_{i}^{s}t_{i+1}^{-s}.$$

\begin{remark}\label{rem-YH}
The degenerate affine Yokonuma-Hecke algebra $\widehat{Y}_{r,n}$ is in fact a special case of the wreath Hecke algebra $\mathcal{H}_{n}(G)$ defined in [WW, Definition 2.4] when $G=C_{r}$ is the cyclic group of order $r;$ see also [RS].
\end{remark}
\noindent By [WW, Theorem 2.8], $\widehat{Y}_{r,n}$ has a $\mathbb{K}$-basis
\begin{equation}\label{PBW-basis}
\big\{t_{1}^{\beta_1}\cdots t_{n}^{\beta_{n}}x_{1}^{\alpha_1}\cdots x_{n}^{\alpha_{n}}f_{w}\:|\:0\leq \beta_1,\ldots,\beta_{n}\leq r-1, \alpha_1,\ldots,\alpha_{n}\geq 0, w\in \mathfrak{S}_{n}\big\}.\end{equation}

We define the degenerate cyclotomic Yokonuma-Hecke algebra $Y_{r, n}^{d}$ to be the quotient:
\[Y_{r, n}^{d}=\widehat{Y}_{r,n}/\langle (x_1-v_1)\cdots (x_{1}-v_{d})\rangle.\]
By [WW, Proposition 5.5] (see also [Ro, Proposition 5.2]), $Y_{r, n}^{d}$ has a $\mathbb{K}$-basis
\begin{equation}\label{CYPBW-basis}
\big\{t_{1}^{j_1}\cdots t_{n}^{j_{n}}x_{1}^{i_1}\cdots x_{n}^{i_{n}}f_{w}\:|\:0\leq i_1,\ldots,i_{n}\leq d-1,0\leq j_1,\ldots,j_{n}\leq r-1, w\in \mathfrak{S}_{n}\big\}.\end{equation}

\subsection{Symmetric algebras}
Let $\mathbb{K}_{d}[x_1,\ldots,x_{n}]$ be the level $d$ truncated polynomial algebra, that is, the quotient of the polynomial algebra $\mathbb{K}[x_1,\ldots,x_{n}]$ by the two-sided ideal generated by $x_{1}^{d},\ldots,x_{n}^{d}.$ Define a filtration $F_{0}Y_{r, n}^{d}\subseteq F_{1}Y_{r, n}^{d}\subseteq \cdots$ by declaring that $F_{k}Y_{r, n}^{d}$ is the span of all $t_{1}^{j_1}\cdots t_{n}^{j_{n}}x_{1}^{i_1}\cdots x_{n}^{i_{n}}f_{w}$ for $0\leq j_1,\ldots,j_{n}\leq r-1$, $i_1,\ldots,i_{n}\geq 0$ and $w\in \mathfrak{S}_{n}$ with $i_1+\cdots+i_{n}\leq k.$

Let $W_{r,n}=(\mathbb{Z}/r\mathbb{Z})\wr \mathfrak{S}_{n}$ and $\mathbb{K}W_{r,n}$ be its group algebra, which has a set of generators $t_1,\ldots,t_{n},s_{1},\ldots,s_{n-1}.$ Consider the twisted tensor algebra $\mathbb{K}_{d}[x_1,\ldots,x_{n}]\,{\scriptstyle{\rtimes\!\!\!\!\!\bigcirc}}\, \mathbb{K}W_{r,n}$ and define a grading on it by declaring that each $x_{i}$ is of degree 1 and each $y\in W_{r,n}$ is of degree 0. Then we can easily get the following lemma.
\begin{lemma}\label{graded-algebra}
The map $\zeta_{r,d}:\mathbb{K}_{d}[x_1,\ldots,x_{n}]\,{\scriptstyle{\rtimes\!\!\!\!\!\bigcirc}}\, \mathbb{K}W_{r,n}\rightarrow \gr Y_{r, n}^{d},$ which is given by $x_i\mapsto \gr_1 x_i$ for each $1\leq i\leq n$, $t_i\mapsto \gr_0 t_i$ for each $1\leq i\leq n$ and $s_j\mapsto \gr_0 f_j$ for each $1\leq j\leq n-1,$ is an isomorphism of graded algebras.
\end{lemma}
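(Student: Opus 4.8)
The plan is to verify that $\zeta_{r,d}$ is a well-defined homomorphism of graded algebras, to observe that it is surjective, and then to conclude that it is an isomorphism by a dimension count. Before anything else, I would confirm that the filtration $F_\bullet Y_{r,n}^{d}$ is multiplicative, so that $\gr Y_{r,n}^{d}$ is genuinely a graded algebra. This amounts to checking that rewriting any product of generators into the standard form of (\ref{CYPBW-basis}) never raises the total $x$-degree $i_1+\cdots+i_n$. The only relation mixing the $x_i$ with the remaining generators is (\ref{drel-def-Y8}), and there the correction term $e_i$ lies in $F_{0}Y_{r,n}^{d}$, so it has strictly smaller $x$-degree than $f_i x_{i+1}$; every other relation preserves the $x$-degree outright.

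Next I would verify that the images $\gr_1 x_i$, $\gr_0 t_i$, $\gr_0 f_j$ satisfy the defining relations of the smash product $\mathbb{K}_{d}[x_1,\ldots,x_{n}]\,{\scriptstyle{\rtimes\!\!\!\!\!\bigcirc}}\, \mathbb{K}W_{r,n}$, which is exactly what makes $\zeta_{r,d}$ well-defined. The degree-$0$ generators $\gr_0 t_i$ and $\gr_0 f_j$ satisfy precisely the group relations of $W_{r,n}$ by (\ref{drel-def-Y1})--(\ref{drel-def-Y6}); the $\gr_1 x_i$ commute by (\ref{drel-def-Y7}); and $\gr_0 t_i$ commutes with $\gr_1 x_j$ by (\ref{drel-def-Y10}), reflecting the trivial action of the cyclic part on the polynomial algebra. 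The crucial cross-relation $\gr_0 f_j\cdot \gr_1 x_i=\gr_1 x_{s_j(i)}\cdot \gr_0 f_j$ follows from (\ref{drel-def-Y8}) and (\ref{drel-def-Y9}): for $i\neq j,j+1$ it is immediate, while for $i=j+1$ (and, after conjugating (\ref{drel-def-Y8}) by $f_j$ via (\ref{drel-def-Y6}), also for $i=j$) the term $e_j$ drops out on passing to $\gr_1$. Finally I must check $(\gr_1 x_i)^d=0$ for every $i$: for $i=1$ this is the leading term of the cyclotomic relation $(x_1-v_1)\cdots(x_1-v_d)=0$, and since the same cross-relation gives $\gr_1 x_{i+1}=\gr_0 f_i\cdot \gr_1 x_i\cdot \gr_0 f_i$, the vanishing propagates to all $i$ by induction.

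With $\zeta_{r,d}$ established as a graded algebra map, surjectivity is clear, since the generators $x_i,t_i,f_j$ of $Y_{r,n}^{d}$ have $\gr_1 x_i,\gr_0 t_i,\gr_0 f_j$ as their leading symbols and hence these images generate $\gr Y_{r,n}^{d}$. For injectivity I would invoke a dimension count: both sides are $\mathbb{K}$-vector spaces of dimension $d^{n}r^{n}n!$. The target has this dimension because $\dim_{\mathbb{K}}\mathbb{K}_{d}[x_1,\ldots,x_{n}]=d^{n}$ and $\dim_{\mathbb{K}}\mathbb{K}W_{r,n}=r^{n}n!$, while $\dim_{\mathbb{K}}\gr Y_{r,n}^{d}=\dim_{\mathbb{K}}Y_{r,n}^{d}=d^{n}r^{n}n!$ is read off from the basis (\ref{CYPBW-basis}). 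A surjection between finite-dimensional spaces of equal dimension is an isomorphism, which finishes the proof.

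The only genuinely non-formal point is the bookkeeping in the well-definedness step, and within it the propagation of $x_i^{d}=0$ from $i=1$ to all $i$ via the $f_i$-conjugation; everything else is a routine comparison of presentations followed by the dimension count.
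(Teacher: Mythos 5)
Your proof is correct and is exactly the argument the paper intends (it omits the proof as "easy", in the style of [BK1, Appendix A]): check multiplicativity of the filtration, verify the defining relations of the twisted tensor product on the symbols — including the propagation of $(\gr_1 x_i)^d=0$ from $i=1$ via conjugation by the $f_j$ — and conclude by surjectivity plus the dimension count $d^n r^n n!$ from the basis \eqref{CYPBW-basis}. No gaps.
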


Recall that a finite dimensional algebra $A$ is symmetric if it possesses a symmetrizing form, i.e. a linear form $\tau: A\rightarrow \mathbb{K}$ such that $\tau(ab)=\tau(ba)$ for all $a,b\in A$ and whose kernel contains no non-zero left or right ideal of $A.$ The following lemma can be regarded as a generalization of [BK1, Lemma A.1].
\begin{lemma}\label{twisted-algebra}
Let $\rho: \mathbb{K}_{d}[x_1,\ldots,x_{n}]\,{\scriptstyle{\rtimes\!\!\!\!\!\bigcirc}}\, \mathbb{K}W_{r,n}\rightarrow \mathbb{K}$ be the linear map sending the monomial $t_{1}^{s_1}\cdots t_{n}^{s_{n}}x_{1}^{r_1}\cdots x_{n}^{r_{n}}w$ to $1$ if $r_1=\cdots=r_{n}=d-1,$ $s_1\equiv\cdots\equiv s_{n}\equiv 0$ $(\bmod$ $r)$ and $w=1$, and to $0$ otherwise. Then $\rho$ is a symmetrizing form, hence $\mathbb{K}_{d}[x_1,\ldots,x_{n}]\,{\scriptstyle{\rtimes\!\!\!\!\!\bigcirc}}\, \mathbb{K}W_{r,n}$ is a symmetric algebra.
\end{lemma}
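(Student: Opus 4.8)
The plan is to verify the two defining properties of a symmetrizing form for the algebra $A:=\mathbb{K}_{d}[x_1,\ldots,x_{n}]\,{\scriptstyle{\rtimes\!\!\!\!\!\bigcirc}}\, \mathbb{K}W_{r,n}$: first that $\rho$ is a trace, i.e. $\rho(ab)=\rho(ba)$ for all $a,b\in A$, and second that the associated symmetric bilinear form $\langle a,b\rangle:=\rho(ab)$ is non-degenerate, which for a trace form is exactly the condition that $\ker\rho$ contain no nonzero one-sided ideal. Throughout I abbreviate a basis monomial as $t^{s}x^{r}w$, where $t^{s}=t_1^{s_1}\cdots t_n^{s_n}$ with $0\le s_j\le r-1$, $x^{r}=x_1^{r_1}\cdots x_n^{r_n}$ with $0\le r_j\le d-1$, and $w\in\mathfrak{S}_n$. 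The defining relations of $A$ give $wx^{r}w^{-1}=x_1^{r_{w^{-1}(1)}}\cdots x_n^{r_{w^{-1}(n)}}$ and $wt^{s}w^{-1}=t_1^{s_{w^{-1}(1)}}\cdots t_n^{s_{w^{-1}(n)}}$, while the $x_i$ and $t_j$ all commute; these are the only manipulations I will need.

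For the trace property it suffices, by the standard induction on the length of $a$ as a word in the generators, to check that $\rho(gm)=\rho(mg)$ for every generator $g\in\{x_1,\ldots,x_n,t_1,\ldots,t_n,s_1,\ldots,s_{n-1}\}$ and every basis monomial $m=t^{s}x^{r}w$. For $g=x_i$ or $g=t_i$ the products $gm$ and $mg$ carry the same $\mathfrak{S}_n$-part $w$; if $w\neq 1$ both are annihilated by $\rho$, while if $w=1$ the generator commutes with the whole factor $t^{s}x^{r}$, so $gm=mg$ as elements and equality of $\rho$-values is immediate. The only substantive case is $g=s_i$, where $s_im$ is $m$ with the exponents of $t$ and $x$ transposed in positions $i,i+1$ and with $\mathfrak{S}_n$-part $s_iw$, and $ms_i$ has $\mathfrak{S}_n$-part $ws_i$. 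Each of $\rho(s_im),\rho(ms_i)$ can be nonzero only when all $x$-exponents equal $d-1$, all $t$-exponents are $\equiv 0$, and $w=s_i$; in that situation the distinguished monomial $x_1^{d-1}\cdots x_n^{d-1}$ with trivial $t$-part is fixed by the transposition $s_i$, so both values equal $1$. Conceptually the point is that $\rho$ detects only this $W_{r,n}$-invariant top monomial, which is why conjugation by a group element cannot alter its value.

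For non-degeneracy I would exhibit an explicit dual basis. Given a basis monomial $b=t^{s}x^{r}w$, define its partner $b^{*}:=t^{s'}x^{r'}w^{-1}$ by $r'_{k}:=d-1-r_{w(k)}$ and $s'_{k}:=(-s_{w(k)})\bmod r$; these lie in the allowed ranges, so $b^{*}$ is again a basis monomial and $b\mapsto b^{*}$ is a bijection of the basis $\mathcal{B}$. Pushing $w$ past the polynomial and cyclic factors and using the two conjugation formulas above, one computes that the $t$-exponents of $bb^{*}$ are all $\equiv 0$, the $x$-exponents are all $d-1$, and the $\mathfrak{S}_n$-part is trivial, whence $bb^{*}=x_1^{d-1}\cdots x_n^{d-1}$ and $\rho(bb^{*})=1$. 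Running the same computation for an arbitrary basis monomial $b'=t^{u}x^{p}v$ shows that $\rho(b'b^{*})\neq0$ forces $v=w$, $p=r$ and $u=s$, so in fact $\rho(b'b^{*})=\delta_{b,b'}$. Hence $\{b^{*}\}_{b\in\mathcal{B}}$ is dual to $\mathcal{B}$ under $\langle\,,\,\rangle$, the form is non-degenerate, and its radical is zero; since the form is symmetric by the trace property, this radical coincides with the largest one-sided ideal contained in $\ker\rho$, so $\ker\rho$ contains no nonzero left or right ideal and $\rho$ is a symmetrizing form.

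I expect the main difficulty to be organizational rather than conceptual, namely keeping careful track of the twisted conjugation together with the reduction of the $t$-exponents modulo $r$ both in defining $b^{*}$ and in checking the Kronecker-delta property. The underlying mechanism is that $\rho$ factors as the product of the top-degree symmetrizing form on the truncated polynomial algebra $\mathbb{K}_d[x_1,\ldots,x_n]$ (pairing $r_j$ with $d-1-r_j$), the canonical form on $\mathbb{K}\mathfrak{S}_n$ (pairing $w$ with $w^{-1}$), and the canonical form on $\mathbb{K}(\mathbb{Z}/r\mathbb{Z})^n$ (pairing $s_j$ with $-s_j$); these glue compatibly precisely because the distinguished elements singled out by $\rho$ are invariant under the $W_{r,n}$-action, which is the feature that lets the argument of [BK1, Lemma A.1] go through in the present, more general, setting.
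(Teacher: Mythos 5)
Your proof is correct, and it is essentially the argument the paper intends: the lemma is stated without proof as ``a generalization of [BK1, Lemma A.1]'', and your two steps --- reducing the trace identity $\rho(ab)=\rho(ba)$ to generators (where only $g=s_i$ is nontrivial, handled by the $W_{r,n}$-invariance of the top monomial $x_1^{d-1}\cdots x_n^{d-1}$) and exhibiting the explicit dual basis $b^{*}=t^{s'}x^{r'}w^{-1}$ with $r'_k=d-1-r_{w(k)}$, $s'_k=-s_{w(k)}$ --- are exactly the Brundan--Kleshchev argument adapted to include the extra $(\mathbb{Z}/r\mathbb{Z})^n$ factor. The computations $\rho(b'b^{*})=\delta_{b,b'}$ and the equivalence of non-degeneracy of the trace form with the absence of one-sided ideals in $\ker\rho$ both check out, so the proposal is a complete and correct proof.
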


Now let $t=(d-1)^{n}$. Recall from Lemma \ref{graded-algebra} that $Y_{r, n}^{d}$ is a filtered algebra with a filtration $$\mathbb{K}W_{r,n}=F_{0}Y_{r, n}^{d}\subseteq F_{1}Y_{r, n}^{d}\subseteq \cdots\subseteq F_{t}Y_{r, n}^{d}=Y_{r, n}^{d},$$ and the associated graded algebra $\gr Y_{r, n}^{d}$ is identified with the twisted tensor product $\mathbb{K}_{d}[x_1,\ldots,x_{n}]\,{\scriptstyle{\rtimes\!\!\!\!\!\bigcirc}}\, \mathbb{K}W_{r,n}.$ For any $0\leq s\leq t$, let $\gr_s: F_{s}Y_{r, n}^{d}\rightarrow \mathbb{K}_{d}[x_1,\ldots,x_{n}]\,{\scriptstyle{\rtimes\!\!\!\!\!\bigcirc}}\, \mathbb{K}W_{r,n}$ be the map sending an element to its degree $s$ graded component.
\begin{theorem}\label{symmetric-algebra}
Let $\hat{\rho}: Y_{r, n}^{d}\rightarrow \mathbb{K}$ be the linear map sending $t_{1}^{s_1}\cdots t_{n}^{s_{n}}x_{1}^{r_1}\cdots x_{n}^{r_{n}}f_{w}$ to $1$ if $r_1=\cdots=r_{n}=d-1,$ $s_1\equiv\cdots\equiv s_{n}\equiv 0$ $(\bmod$ $r)$ and $w=1$, and to $0$ otherwise. Then $\hat{\rho}$ is a symmetrizing form, hence $Y_{r, n}^{d}$ is a symmetric algebra.
\end{theorem}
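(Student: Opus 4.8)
The plan is to transfer the symmetric structure from the associated graded algebra $\gr Y_{r,n}^{d}$, where it is already available by Lemma \ref{twisted-algebra}, up to the filtered algebra $Y_{r,n}^{d}$ itself, following the strategy of [BK1, Appendix A]. Write $m=n(d-1)$ for the maximal total degree of a basis monomial in (\ref{CYPBW-basis}), so that $F_{m}Y_{r,n}^{d}=Y_{r,n}^{d}$, and via the isomorphism $\zeta_{r,d}$ of Lemma \ref{graded-algebra} regard $\rho$ as the induced symmetrizing form on $\gr Y_{r,n}^{d}$; note that $\rho$ is homogeneous of degree $m$, i.e. it vanishes off the top graded piece $\gr_{m}Y_{r,n}^{d}$. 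For $a\in Y_{r,n}^{d}$ let $a=\sum_{s=0}^{m}a_{s}$ be its decomposition into homogeneous components, where $a_{s}$ is the sum of those terms of $a$ involving basis monomials with $i_{1}+\cdots+i_{n}=s$, viewed as an element of $\gr_{s}Y_{r,n}^{d}$. The first step is to record the key identity
\[\hat\rho=\rho\circ\gr_{m},\qquad\text{i.e. }\ \hat\rho(a)=\rho(a_{m})\ \text{ for all }a.\]
Indeed, $\hat\rho$ is supported only on basis monomials with all $i_{k}=d-1$, and these are exactly the monomials of top degree $m$; since a monomial in $F_{m-1}Y_{r,n}^{d}$ has $\sum i_{k}\leq m-1<n(d-1)$ it cannot have all $i_{k}=d-1$, so $\hat\rho$ kills $F_{m-1}Y_{r,n}^{d}$. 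Comparing the defining conditions $r_{1}=\cdots=r_{n}=d-1$, $s_{1}\equiv\cdots\equiv s_{n}\equiv 0\ (\mathrm{mod}\ r)$, $w=1$ for $\hat\rho$ with those for $\rho$ then shows that $\hat\rho$ coincides with $\rho$ on $\gr_{m}Y_{r,n}^{d}$; this uses only that $\zeta_{r,d}$ sends $t_{i}\mapsto t_{i}$ and $f_{w}\mapsto w$ in degree $0$.

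Next I would deduce the trace property from that of $\rho$. Since the filtration is multiplicative, the degree-$m$ component of a product satisfies $(ab)_{m}=\sum_{i+j=m}a_{i}b_{j}$, the products being taken in $\gr Y_{r,n}^{d}$. Applying the identity above together with the fact that $\rho$ is a trace (Lemma \ref{twisted-algebra}) term by term gives
\[\hat\rho(ab)=\rho\Bigl(\sum_{i+j=m}a_{i}b_{j}\Bigr)=\sum_{i+j=m}\rho(a_{i}b_{j})=\sum_{i+j=m}\rho(b_{j}a_{i})=\hat\rho(ba),\]
so $\hat\rho$ is a trace on $Y_{r,n}^{d}$.

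For non-degeneracy I would argue as follows. Let $a\neq 0$ and let $s$ be maximal with $a_{s}\neq 0$; then $a_{s}$ is a nonzero homogeneous element of $\gr Y_{r,n}^{d}$ of degree $s$. Because $\rho$ is non-degenerate and homogeneous of degree $m$, there is a homogeneous $\bar c$ of degree $m-s$ with $\rho(\bar c\,a_{s})\neq 0$: choosing any $\bar d$ with $\rho(\bar d\,a_{s})\neq 0$ and expanding $\bar d$ into homogeneous pieces, only the degree-$(m-s)$ piece contributes to $\rho(\bar d\,a_{s})$, so we may take $\bar c=\bar d_{m-s}$. Lift $\bar c$ to $c\in F_{m-s}Y_{r,n}^{d}$ with $c_{m-s}=\bar c$. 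Since $a_{j}=0$ for $j>s$ and $c_{i}=0$ for $i>m-s$, the only surviving term in $(ca)_{m}=\sum_{i+j=m}c_{i}a_{j}$ is $c_{m-s}a_{s}=\bar c\,a_{s}$, whence $\hat\rho(ca)=\rho(\bar c\,a_{s})\neq 0$. Thus no nonzero left ideal of $Y_{r,n}^{d}$ lies in $\ker\hat\rho$; the symmetric argument with $ac$ handles right ideals. Together with the trace property this shows $\hat\rho$ is a symmetrizing form, so $Y_{r,n}^{d}$ is symmetric.

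The main obstacle is the bookkeeping in passing between the filtered and graded pictures — precisely, justifying the multiplicativity formula $(ab)_{m}=\sum_{i+j=m}a_{i}b_{j}$ and the single-surviving-term computation $(ca)_{m}=\bar c\,a_{s}$, both of which rest on Lemma \ref{graded-algebra} identifying $\gr Y_{r,n}^{d}$ with the twisted tensor product so that leading terms multiply exactly as in $\gr Y_{r,n}^{d}$. Everything else is a formal consequence of the symmetry of $\rho$ established in Lemma \ref{twisted-algebra}.
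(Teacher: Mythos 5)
Your overall strategy --- the key identity $\hat\rho=\rho\circ\gr_{m}$ with $m=n(d-1)$ and the transfer of non-degeneracy from $\gr Y_{r,n}^{d}$ --- is exactly the paper's (one-line) proof, which records the same observation and defers the details to [BK1, Appendix A]; your verification of the identity and your non-degeneracy argument are correct. However, the step establishing the trace property has a genuine gap: the formula $(ab)_{m}=\sum_{i+j=m}a_{i}b_{j}$ is false in general. Multiplicativity of the filtration gives $F_{i}F_{j}\subseteq F_{i+j}$, and hence controls the top graded component of a product of components $a_{i}',b_{j}'$ only when $i+j\leq m$; when $i+j>m$ one has $F_{i+j}=F_{m}=Y_{r,n}^{d}$ and $\gr_{i+j}=0$, which says nothing about the degree-$m$ part of $a_{i}'b_{j}'$. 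Concretely, the cyclotomic relation folds $x_{1}^{d}$ back into lower degrees, so these cross terms do contribute to $\gr_{m}$. For $r=1$, $n=1$, $d=2$ (so $m=1$) and $a=b=x_{1}$ one has $x_{1}^{2}=(v_{1}+v_{2})x_{1}-v_{1}v_{2}$, hence $\hat\rho(ab)=v_{1}+v_{2}$, while your formula predicts $\rho(a_{0}b_{1}+a_{1}b_{0})=0$. (Your non-degeneracy argument is immune to this because there one factor is forced into $F_{m-s}$, so every cross degree is at most $m$.)

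To repair the trace property you cannot stay purely in the graded picture. Two standard fixes: (i) reduce to generators --- if $\tau(ab)=\tau(ba)$ holds for all $a$ and for $b$ in an algebra generating set, then it holds for all $a,b$, since $\tau(a(bc))=\tau((ab)c)=\tau(c(ab))=\tau((ca)b)=\tau(b(ca))=\tau((bc)a)$; so it suffices to check $\hat\rho(ab)=\hat\rho(ba)$ directly from the defining relations for $b\in\{t_{i},f_{j},x_{k}\}$ and $a$ a basis element. Or (ii) follow the paper's alternative route (Proposition~\ref{coincide} and the remark after it), which identifies $r^{n}\hat\rho$ with the pullback, under the isomorphism of Theorem~\ref{isomorphsim-theorem2}, of the manifestly symmetrizing form $\bigoplus_{\mu}\tau^{\mu}\circ\text{Tr}_{\text{Mat}_{m_{\mu}}}$, thereby importing the trace property from the known symmetry of $H_{n}^{d}$.
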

\begin{proof}
It follows from the key observation that $\hat{\rho}=\rho\circ \gr_t.$
\end{proof}

\section{An isomorphism theorem for degenerate cyclotomic Yokonuma-Hecke algebras}

Largely inspired by [PA, Section 3], we establish an explicit algebra isomorphism between the degenerate affine (resp. cyclotomic) Yokonuma-Hecke algebra $\widehat{Y}_{r,n}$ (resp. $Y_{r,n}^{d}$) and a direct sum of matrix algebras over tensor products of degenerate affine (resp. cyclotomic) Hecke algebras of type $A$ in this section.

\subsection{Preliminaries}
We first recall some constructions presented in [PA, Section 2]. Let $\mathcal{T}_{r, n}$ be the commutative subalgebra of $\widehat{Y}_{r,n}$ generated by $t_1,\ldots,t_{n}$ which is isomorphic to the group algebra of $(\mathbb{Z}/r\mathbb{Z})^{n}$ and let $\{\zeta_1,\ldots,\zeta_{r}\}$ be the set of all $r$-th roots of unity. A character $\chi$ of $\mathcal{T}_{r, n}$ over $\mathbb{K}$ is characterized by the choice of $\chi(t_{j})\in \{\zeta_1,\ldots,\zeta_{r}\}$ for each $j=1,\ldots,n.$ We denote by $\text{Irr}(\mathcal{T}_{r, n})$ the set of characters of $\mathcal{T}_{r, n}$ over $\mathbb{K}$, which is endowed with an action of $\mathfrak{S}_{n}$ by $w(\chi)(t_i)=\chi(t_{w^{-1}(i)}).$

For each $\chi\in \text{Irr}(\mathcal{T}_{r, n})$, let $E_{\chi}$ be the primitive idempotent of $\mathcal{T}_{r, n}$ associated to $\chi,$ which can be explicitly written in terms of the generators as follows:
\begin{equation}\label{idempotents}
E_{\chi}=\prod_{1\leq i\leq n}\bigg(\frac{1}{r}\sum_{0\leq s\leq r-1}\chi(t_{i})^{s}t_{i}^{-s}\bigg).\end{equation}
For $\alpha=(\alpha_1,\ldots,\alpha_{n})\in \mathbb{Z}_{\geq 0}^{n}$, we set $\bar{x}^{\alpha}=\bar{x}_{1}^{\alpha_{1}}\cdots \bar{x}_{n}^{\alpha_{n}}$ and $x^{\alpha}=x_{1}^{\alpha_{1}}\cdots x_{n}^{\alpha_{n}}$ for brevity. From \eqref{PBW-basis}, we obtain another $\mathbb{K}$-basis of $\widehat{Y}_{r,n}$:
\begin{equation}\label{PBW-basis2}
\big\{E_{\chi}x^{\alpha}f_{w}\:|\:\chi\in \text{Irr}(\mathcal{T}_{r, n}), \alpha=(\alpha_1,\ldots,\alpha_{n})\in \mathbb{Z}_{\geq 0}^{n}, w\in \mathfrak{S}_{n}\big\};\end{equation}
and from \eqref{CYPBW-basis}, we obtain another $\mathbb{K}$-basis of $Y_{r, n}^{d}$:
\begin{equation}\label{PBW-basis2}
\big\{E_{\chi}x^{\beta}f_{w}\:|\:\chi\in \text{Irr}(\mathcal{T}_{r, n}), \beta=(\beta_1,\ldots,\beta_{n})\text{ with each }0\leq \beta_i\leq d-1, w\in \mathfrak{S}_{n}\big\}.\end{equation}

An $r$-tuple $\mu=(\mu_1,\ldots,\mu_{r})\in \mathbb{Z}_{\geq 0}^{r}$ such that $\Sigma_{1\leq a\leq r}\mu_{a}=n$ is called an $r$-composition of $n,$ which is denoted by $\mu \models n$. Let $\mathcal{C}_{r,n}$ be the set of $r$-compositions of $n.$ Assume that $\chi\in \text{Irr}(\mathcal{T}_{r, n}).$ For $a\in \{1,\ldots,r\},$ let $\mu_a$ be the number of elements $j\in \{1,\ldots,n\}$ such that $\chi(t_j)=\zeta_a.$ Then the sequence $(\mu_1,\ldots,\mu_r)\in \mathcal{C}_{r,n}$, and we denote it by $\text{Comp}(\chi).$

For each $\mu \models n,$ we define a particular character $\chi_{0}^{\mu}\in \text{Irr}(\mathcal{T}_{r, n})$ by
\begin{equation}\label{chi0-mu}
\left\{\begin{array}{ccccccc}
\chi_0^{\mu} (t_1)&=&\ldots & =& \chi_0^{\mu} (t_{\mu_1})&=& \zeta_1\ ,\\[0.2em]
\chi_0^{\mu} (t_{\mu_1+1})&=&\ldots & =& \chi_0^{\mu} (t_{\mu_1+\mu_2})&=& \zeta_2\ ,\\
\vdots &\vdots &\vdots &\vdots &\vdots &\vdots &\vdots  \\
\chi_0^{\mu} (t_{\mu_1+\dots+\mu_{r-1}+1})&=&\ldots & =& \chi_0^{\mu} (t_{n})&=& \zeta_r\ .\\
\end{array}\right.
\end{equation}
Notice that $\text{Comp}(\chi_{0}^{\mu})=\mu.$ From \eqref{chi0-mu}, we see that the stabilizer of $\chi_{0}^{\mu}$ under the action of $\mathfrak{S}_{n}$ is the Young subgroup $\mathfrak{S}^{\mu},$ which is defined to be $\mathfrak{S}_{\mu_1}\times\cdots\times\mathfrak{S}_{\mu_r}$. Notice that there is a unique representative of minimal length in each left coset in $\mathfrak{S}_{n}/\mathfrak{S}^{\mu}.$ Thus, for any $\chi\in \text{Irr}(\mathcal{T}_{r, n})$ such that $\text{Comp}(\chi)=\mu$, we define a permutation $\pi_{\chi}\in \mathfrak{S}_{n}$ by requiring that $\pi_{\chi}$ is the distinguished left coset representative such that
\begin{equation}\label{chi-mu}
\pi_{\chi}(\chi_{0}^{\mu})=\chi.
\end{equation}

\subsection{Isomorphism theorem for degenerate affine and cyclotomic Yokonuma-Hecke algebras}

For each $\mu \models n,$ we denote by $\widehat{H}^{\mu}$ the algebra $\widehat{H}_{\mu_1}\otimes\cdots\otimes \widehat{H}_{\mu_r},$ which is a free $\mathbb{K}$-module with basis $\{\bar{x}^{\alpha}\bar{w}\:|\:\alpha\in \mathbb{Z}_{\geq 0}^{n}, w\in \mathfrak{S}^{\mu}\}$, and we set $m_{\mu}$ to be the index of the Young subgroup $\mathfrak{S}^{\mu}$ in $\mathfrak{S}_{n}$, that is, $m_{\mu}=m!/(\mu_{1}!\cdots\mu_{r}!).$

For each $\mu \models n,$ let $\text{Mat}_{m_{\mu}}(\widehat{H}^{\mu})$ be the algebra of matrices of size $m_{\mu}$ with coefficients in $\widehat{H}^{\mu}.$ It is easy to see that $m_{\mu}$ is also the number of characters $\chi\in \text{Irr}(\mathcal{T}_{r, n})$ such that $\text{Comp}(\chi)=\mu$. Thus, we can label the rows and columns of a matrix in $\text{Mat}_{m_{\mu}}(\widehat{H}^{\mu})$ by such characters. Moreover, for two characters $\chi, \chi'$ such that $\text{Comp}(\chi)=\text{Comp}(\chi')=\mu$, we denote by $\bm{1}_{\chi, \chi'}$ the matrix in $\text{Mat}_{m_{\mu}}(\widehat{H}^{\mu})$ with $1$ in line $\chi$ and column $\chi'$, and $0$ elsewhere.

For each $\mu \models n,$ we set
\begin{equation*}
E_{\mu} :=\sum_{\text{Comp}(\chi)=\mu}E_{\chi}.
\end{equation*}
Then the set $\{E_{\mu}\:|\:\mu\in \mathcal{C}_{r,n}\}$ forms a complete set of pairwise orthogonal central idempotents in $\widehat{Y}_{r,n}$. In particular, we have the following decomposition of $\widehat{Y}_{r,n}$ into a direct sum of two-sided ideals:
\begin{equation}\label{direct-sum}
\widehat{Y}_{r,n}=\bigoplus_{\text{Comp}(\chi)=\mu}E_{\mu}\widehat{Y}_{r,n}.
\end{equation}

Let $\mu \models n.$ We define a linear map
\begin{equation*}
\Phi_{\mu}: E_{\mu}\widehat{Y}_{r,n}\rightarrow \text{Mat}_{m_{\mu}}(\widehat{H}^{\mu})
\end{equation*}
by
\begin{equation}\label{Phi-mu}
\Phi_{\mu}(E_{\chi}x^{\alpha}f_{w})=\bm{1}_{\chi, w^{-1}(\chi)}\bar{x}^{\pi_{\chi}^{-1}(\alpha)}\overline{\pi_{\chi}^{-1}w\pi_{w^{-1}(\chi)}}
\end{equation}
for $\chi\in \text{Irr}(\mathcal{T}_{r, n})$ such that $\text{Comp}(\chi)=\mu$, $\alpha\in \mathbb{Z}_{\geq 0}^{n}$ and $w\in \mathfrak{S}_{n}.$

We also define a linear map
\begin{equation*}
\Psi_{\mu}: \text{Mat}_{m_{\mu}}(\widehat{H}^{\mu})\rightarrow E_{\mu}\widehat{Y}_{r,n}
\end{equation*}
by
\begin{equation}\label{Psi-mu}
\Psi_{\mu}(\bm{1}_{\chi, \chi'}\bar{x}^{\alpha}\bar{w})=E_{\chi}x^{\pi_{\chi}(\alpha)}f_{\pi_{\chi}w\pi_{\chi'}^{-1}}
\end{equation}
for $\chi, \chi'\in \text{Irr}(\mathcal{T}_{r, n})$ such that $\text{Comp}(\chi)=\text{Comp}(\chi')=\mu$, $\alpha\in \mathbb{Z}_{\geq 0}^{n}$ and $w\in \mathfrak{S}^{\mu}.$

We define the linear maps $\Phi_{r, n} :=\bigoplus_{\mu\in \mathcal{C}_{r,n}}\Phi_{\mu}$ and $\Psi_{r, n} :=\bigoplus_{\mu\in \mathcal{C}_{r,n}}\Psi_{\mu}.$ The following theorem can be proved in exactly the same way as in [PA, Theorem 3.1] (much easier).
\begin{theorem}\label{isomorphsim-theorem1}
Let $\mu \models n.$ The linear map $\Phi_{\mu}$ is an isomorphism of algebras with the inverse map $\Psi_{\mu}.$ Accordingly, $\Phi_{r, n}$ defines an isomorphism between the degenerate affine Yokonuma-Hecke algebra $\widehat{Y}_{r,n}$ and $\bigoplus_{\mu\in \mathcal{C}_{r,n}}\emph{Mat}_{m_{\mu}}(\widehat{H}^{\mu})$ with the inverse map $\Psi_{r, n}.$
\end{theorem}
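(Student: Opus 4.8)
The plan is to argue blockwise. Since the $E_\mu$ for $\mu\in\mathcal{C}_{r,n}$ are pairwise orthogonal central idempotents with $\sum_\mu E_\mu=1$, the decomposition \eqref{direct-sum} reduces the global statement to showing that each $\Phi_\mu\colon E_\mu\widehat{Y}_{r,n}\to\text{Mat}_{m_\mu}(\widehat{H}^{\mu})$ is an algebra isomorphism with inverse $\Psi_\mu$; the assertion for $\Phi_{r,n}$ and $\Psi_{r,n}$ then follows by taking direct sums. First I would check that $\Phi_\mu$ and $\Psi_\mu$ are mutually inverse \emph{linear} bijections, which is purely combinatorial. The key points are that $\text{Comp}(w^{-1}(\chi))=\mu$ and that $\pi_\chi^{-1}w\pi_{w^{-1}(\chi)}$ lies in $\mathfrak{S}^{\mu}$: this element fixes $\chi_0^\mu$ since $\pi_{w^{-1}(\chi)}(\chi_0^\mu)=w^{-1}(\chi)$ and $\pi_\chi(\chi_0^\mu)=\chi$, and $\mathfrak{S}^\mu$ is exactly the stabilizer of $\chi_0^\mu$. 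Hence $\Phi_\mu$ really lands in $\text{Mat}_{m_\mu}(\widehat{H}^{\mu})$, and comparing \eqref{Phi-mu} with \eqref{Psi-mu} on the basis \eqref{PBW-basis2} gives $\Psi_\mu\circ\Phi_\mu=\mathrm{id}$ and $\Phi_\mu\circ\Psi_\mu=\mathrm{id}$ at once.

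The real content is multiplicativity of $\Phi_\mu$. I would isolate the elementary relations in $\widehat{Y}_{r,n}$ that drive everything: $t_jE_\chi=\chi(t_j)E_\chi$ (so the $E_\chi$ are orthogonal idempotents), $x_iE_\chi=E_\chi x_i$, $f_wE_\chi=E_{w(\chi)}f_w$, and the crucial identity $E_\chi e_i=\delta_{\chi(t_i),\chi(t_{i+1})}E_\chi$, which holds because $\frac1r\sum_{s}\big(\chi(t_i)\chi(t_{i+1})^{-1}\big)^s$ equals $1$ or $0$. I would also record that, since the $f_i$ satisfy the braid relations together with $f_i^2=1$, the assignment $w\mapsto f_w$ is a genuine homomorphism from $\mathfrak{S}_n$, so $f_uf_v=f_{uv}$ and $f_w^{-1}=f_{w^{-1}}$. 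Using these, multiplicativity splits into two complementary phenomena. On the diagonal corner $E_{\chi_0^\mu}\widehat{Y}_{r,n}E_{\chi_0^\mu}$, every $f_j$ with $s_j\in\mathfrak{S}^\mu$ satisfies $E_{\chi_0^\mu}e_j=E_{\chi_0^\mu}$ (because $\chi_0^\mu(t_j)=\chi_0^\mu(t_{j+1})$), so the relation $f_jx_{j+1}=x_jf_j+e_j$ collapses to $\bar{s}_j\bar{x}_{j+1}=\bar{x}_j\bar{s}_j+1$; this identifies the corner with $\widehat{H}^{\mu}$ via $E_{\chi_0^\mu}x^\alpha f_v\mapsto\bar{x}^\alpha\bar{v}$ for $v\in\mathfrak{S}^\mu$, a claim I would verify on the generators $\bar x_i,\bar s_j$.

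The main obstacle is the off-diagonal behaviour, namely understanding how $f_{\pi_\chi}$ intertwines the polynomial part. Here I claim the clean conjugation identity
\begin{equation*}
E_\chi f_{\pi_\chi}x_i=E_\chi x_{\pi_\chi(i)}f_{\pi_\chi}\qquad(1\le i\le n),
\end{equation*}
which upgrades to $E_\chi f_{\pi_\chi}x^\alpha=E_\chi x^{\pi_\chi(\alpha)}f_{\pi_\chi}$ since the $x_i$ commute with each other and with $E_\chi$. The subtlety is that conjugation by a single $f_i$ produces inhomogeneous corrections, $f_ix_if_i=x_{i+1}-e_if_i$ and $f_ix_{i+1}f_i=x_i+e_if_i$, so \emph{a priori} $f_{\pi_\chi}x_if_{\pi_\chi}^{-1}\neq x_{\pi_\chi(i)}$. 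I would prove the identity by induction on $\ell(\pi_\chi)$: choosing a length-reducing left factorisation $\pi_\chi=s_k\pi''$ with $\pi''$ again a distinguished coset representative, the fact that $\pi_\chi$ and $\pi''$ represent distinct cosets forces $\chi''(t_k)\neq\chi''(t_{k+1})$ for $\chi''=\pi''(\chi_0^\mu)$, whence $E_{\chi''}e_k=0$ and each correction term is annihilated, reducing the claim to the inductive hypothesis for $\pi''$.

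Granting this identity, the proof closes cleanly. The elements $E_\chi=f_{\pi_\chi}E_{\chi_0^\mu}f_{\pi_\chi}^{-1}$ are conjugate idempotents, so the orthogonal system $\{E_\chi:\text{Comp}(\chi)=\mu\}$ yields the standard matrix-units isomorphism $\text{Mat}_{m_\mu}\!\big(E_{\chi_0^\mu}\widehat{Y}_{r,n}E_{\chi_0^\mu}\big)\xrightarrow{\sim}E_\mu\widehat{Y}_{r,n}$, $\bm{1}_{\chi,\chi'}a\mapsto f_{\pi_\chi}\,a\,f_{\pi_{\chi'}}^{-1}$. Combining it with $\text{Mat}_{m_\mu}$ of the corner isomorphism of the second paragraph and applying the clean conjugation identity to rewrite $f_{\pi_\chi}E_{\chi_0^\mu}x^\alpha f_v f_{\pi_{\chi'}}^{-1}=E_\chi x^{\pi_\chi(\alpha)}f_{\pi_\chi v\pi_{\chi'}^{-1}}$ shows that this composite is precisely $\Psi_\mu$. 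Hence $\Psi_\mu$, and therefore its linear inverse $\Phi_\mu$, is an algebra isomorphism. Equivalently, one may feed the same four displayed relations together with the conjugation identity into a direct verification of $\Phi_\mu(ab)=\Phi_\mu(a)\Phi_\mu(b)$ on the basis \eqref{PBW-basis2}, exactly as in [PA, Theorem 3.1], the degenerate relations making the bookkeeping considerably lighter than in the loc.\ cit.
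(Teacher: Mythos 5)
Your proposal is correct and follows essentially the same route as the paper, which reduces to checking that $\Phi_\mu$ and $\Psi_\mu$ are mutually inverse and then establishes multiplicativity via exactly the two ingredients you isolate: the crucial equality \eqref{EXG} (your ``clean conjugation identity'') and the corner isomorphism $\widehat{H}^{\mu}\cong E_{\chi_0^{\mu}}\widehat{Y}_{r,n}E_{\chi_0^{\mu}}$ of Lemma 3.2. In fact you supply the details the paper omits --- notably the induction on $\ell(\pi_\chi)$ proving \eqref{EXG}, where the distinguished-coset property forces $E_{\chi''}e_k=0$ and kills the correction terms --- and your proof is sound.
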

\begin{proof}
It is easy to check that $\Phi_{\mu}\circ \Psi_{\mu}=\text{Id}$ and $\Psi_{\mu}\circ \Phi_{\mu}=\text{Id}$. To check that $\Psi_{\mu}$ is an algebra homomorphism, we need the following crucial equality:
\begin{equation}\label{EXG}
E_{\chi}x^{\pi_{\chi}(\alpha)}f_{\pi_{\chi}}=E_{\chi}f_{\pi_{\chi}}x^{\alpha}\qquad \text{for any } \text{Comp}(\chi)=\mu
\end{equation}
and the next lemma. We omit the details.
\end{proof}
\begin{lemma}
Let $\mu \models n.$ There exists an algebra isomorphism \[\phi_{\mu}: \widehat{H}^{\mu}\overset{\sim}{\longrightarrow} E_{\chi_{0}^{\mu}}\widehat{Y}_{r,n}E_{\chi_{0}^{\mu}}, \] which is defined by $\phi_{\mu}(\bar{x}^{\alpha}\bar{w})=E_{\chi_{0}^{\mu}}{x}^{\alpha}f_{w}$ for $\alpha\in \mathbb{Z}_{\geq 0}^{n}$ and $w\in \mathfrak{S}^{\mu}.$
\end{lemma}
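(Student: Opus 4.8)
The plan is to first pin down a basis of the corner algebra $E_{\chi_0^\mu}\widehat{Y}_{r,n}E_{\chi_0^\mu}$, which makes the bijectivity of $\phi_\mu$ transparent, and then to establish multiplicativity by transporting the defining relations of $\widehat{H}^\mu$ onto generators inside this corner algebra.

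First I would record two consequences of the relations. From \eqref{drel-def-Y4} one gets $f_w t_j = t_{w(j)} f_w$ for all $w\in\mathfrak{S}_n$, and feeding this into \eqref{idempotents} yields $f_w E_\chi = E_{w(\chi)} f_w$; since the stabilizer of $\chi_0^\mu$ in $\mathfrak{S}_n$ is $\mathfrak{S}^\mu$, this specializes to $f_w E_{\chi_0^\mu}=E_{\chi_0^\mu}f_w$ for $w\in\mathfrak{S}^\mu$. Reading off one factor of \eqref{idempotents} gives $E_\chi t_i=\chi(t_i)E_\chi$, hence
\[
E_\chi e_i=\Big(\tfrac{1}{r}\sum_{s=0}^{r-1}\big(\chi(t_i)\chi(t_{i+1})^{-1}\big)^s\Big)E_\chi=\begin{cases}E_\chi,&\chi(t_i)=\chi(t_{i+1}),\\ 0,&\text{otherwise.}\end{cases}
\]
Using $E_{\chi_0^\mu}E_\chi=\delta_{\chi,\chi_0^\mu}E_{\chi_0^\mu}$, the commutation of $x^\alpha$ with $E_{\chi_0^\mu}$ from \eqref{drel-def-Y10}, and $f_wE_{\chi_0^\mu}=E_{w(\chi_0^\mu)}f_w$, I would collapse the idempotent basis $\{E_\chi x^\alpha f_w\}$ of $\widehat{Y}_{r,n}$ coming from \eqref{PBW-basis} via
\[
E_{\chi_0^\mu}\big(E_\chi x^\alpha f_w\big)E_{\chi_0^\mu}=\delta_{\chi,\chi_0^\mu}\,\delta_{w(\chi_0^\mu),\chi_0^\mu}\,E_{\chi_0^\mu}x^\alpha f_w.
\]
This shows that $E_{\chi_0^\mu}\widehat{Y}_{r,n}E_{\chi_0^\mu}$ is spanned by $\{E_{\chi_0^\mu}x^\alpha f_w:\alpha\in\mathbb{Z}_{\geq 0}^{n},\ w\in\mathfrak{S}^\mu\}$; as these form a subfamily of the basis of $\widehat{Y}_{r,n}$ they are linearly independent, hence a basis, and $\phi_\mu$ carries the basis $\{\bar{x}^\alpha\bar{w}\}$ of $\widehat{H}^\mu$ bijectively onto it. Thus $\phi_\mu$ is a linear isomorphism.

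For multiplicativity I would instead build the map from generators and check relations. Consider the assignment $\bar{x}_j\mapsto E_{\chi_0^\mu}x_j$ and $\bar{s}_i\mapsto E_{\chi_0^\mu}f_i$ (for those $i$ with $s_i\in\mathfrak{S}^\mu$), valued in the corner algebra whose identity element is $E_{\chi_0^\mu}$. The within-block degenerate affine Hecke relations and the across-block commutations that define $\widehat{H}^\mu$ follow from \eqref{drel-def-Y1}, \eqref{drel-def-Y2}, \eqref{drel-def-Y6}, \eqref{drel-def-Y7}, \eqref{drel-def-Y9}, together with $f_iE_{\chi_0^\mu}=E_{\chi_0^\mu}f_i$. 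The only relation needing care is \eqref{EDAHA}: by \eqref{drel-def-Y8},
\[
\big(E_{\chi_0^\mu}f_i\big)\big(E_{\chi_0^\mu}x_{i+1}\big)=E_{\chi_0^\mu}f_ix_{i+1}=E_{\chi_0^\mu}x_if_i+E_{\chi_0^\mu}e_i,
\]
and because $s_i\in\mathfrak{S}^\mu$ keeps $i$ and $i+1$ inside one block, \eqref{chi0-mu} forces $\chi_0^\mu(t_i)=\chi_0^\mu(t_{i+1})$, so $E_{\chi_0^\mu}e_i=E_{\chi_0^\mu}$; this reproduces exactly $\bar{s}_i\bar{x}_{i+1}=\bar{x}_i\bar{s}_i+1$. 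Hence the assignment extends to an algebra homomorphism $\psi:\widehat{H}^\mu\to E_{\chi_0^\mu}\widehat{Y}_{r,n}E_{\chi_0^\mu}$, and evaluating it on $\bar{x}^\alpha\bar{w}$ with $w=s_{i_1}\cdots s_{i_k}$ a reduced word in $\mathfrak{S}^\mu$ gives $\psi(\bar{x}^\alpha\bar{w})=E_{\chi_0^\mu}x^\alpha f_w=\phi_\mu(\bar{x}^\alpha\bar{w})$. Therefore $\psi=\phi_\mu$ on the basis, so $\phi_\mu$ is an algebra homomorphism, and with the previous paragraph it is an isomorphism.

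I expect the crux to be the mismatch between \eqref{drel-def-Y8} and \eqref{EDAHA}: the extra idempotent $e_i$ present in the Yokonuma setting must degenerate to the identity after multiplication by $E_{\chi_0^\mu}$, and this is precisely what the choice of $\chi_0^\mu$ and the block structure of $\mathfrak{S}^\mu$ guarantee. Everything else is bookkeeping with the orthogonal idempotents $E_\chi$ and the intertwining relation $f_wE_\chi=E_{w(\chi)}f_w$.
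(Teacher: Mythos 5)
Your proof is correct and complete. The paper itself gives no argument for this lemma (it is invoked inside the proof of Theorem 3.1, whose details are omitted with a pointer to the non-degenerate case in [PA]), and what you have written is precisely the expected argument: the idempotent calculus $f_wE_{\chi}=E_{w(\chi)}f_w$ and $E_{\chi}e_i=E_{\chi}$ or $0$ according to whether $\chi(t_i)=\chi(t_{i+1})$, a basis count identifying the corner algebra with the span of $E_{\chi_0^{\mu}}x^{\alpha}f_w$ for $w\in\mathfrak{S}^{\mu}$, and the key observation that the block structure of $\chi_0^{\mu}$ collapses $e_i$ to the identity of the corner algebra so that the Yokonuma relation $f_ix_{i+1}=x_if_i+e_i$ descends to the degenerate affine Hecke relation. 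Nothing is missing.
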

\begin{remark}\label{remark}
The equality \eqref{EXG} can be regarded as a degenerate case of the next one
\begin{equation}\label{EXG1}
E_{\chi}X^{\pi_{\chi}(\alpha)}g_{\pi_{\chi}}=E_{\chi}g_{\pi_{\chi}}X^{\alpha}\qquad \text{for any } \text{Comp}(\chi)=\mu,
\end{equation}
which plays an important role in the proof of [PA, Theorem 3.1].
\end{remark}

For $\mu \models n,$ we set $H^{\mu} :=H_{\mu_{1}}^{d}\otimes\cdots\otimes H_{\mu_{r}}^{d}$. By definition, $H^{\mu}$ is the quotient of the algebra $\widehat{H}^{\mu}$ by the two-sided ideal generated by the elements
\begin{equation}\label{elements}
(\bar{x}_{\mu_1+\cdots+\mu_{a-1}+1}-v_1)\cdots (\bar{x}_{\mu_1+\cdots+\mu_{a-1}+1}-v_{d}),\qquad a=1,\ldots,r.
\end{equation}
The following theorem can be proved in exactly the same way as in [PA, Corollary 3.2].
\begin{theorem}\label{isomorphsim-theorem2}
There exists an algebra isomorphism between the degenerate cyclotomic Yokonuma-Hecke algebra $Y_{r, n}^{d}$ and the direct sum $\bigoplus_{\mu\in \mathcal{C}_{r,n}}\emph{Mat}_{m_{\mu}}(H^{\mu}).$
\end{theorem}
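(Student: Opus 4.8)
The plan is to descend the isomorphism $\Phi_{r,n}$ of Theorem \ref{isomorphsim-theorem1} to the defining quotients on both sides. Write $P(x)=(x-v_1)\cdots(x-v_d)$, so that $Y_{r,n}^{d}=\widehat{Y}_{r,n}/I$, where $I$ is the two-sided ideal generated by $P(x_1)$; and let $\mathcal{I}_\mu$ denote the two-sided ideal of $\widehat{H}^{\mu}$ generated by the elements \eqref{elements}, so that $H^{\mu}=\widehat{H}^{\mu}/\mathcal{I}_\mu$ and hence $\text{Mat}_{m_\mu}(H^{\mu})=\text{Mat}_{m_\mu}(\widehat{H}^{\mu})/\text{Mat}_{m_\mu}(\mathcal{I}_\mu)$. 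Since $\Phi_{r,n}$ is an algebra isomorphism, it carries two-sided ideals to two-sided ideals, so the whole statement reduces to the single ideal identity $\Phi_{r,n}(I)=\bigoplus_{\mu\in\mathcal{C}_{r,n}}\text{Mat}_{m_\mu}(\mathcal{I}_\mu)$; the asserted isomorphism is then the map induced on the quotients.

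To prove this identity I would argue blockwise. Because $x_1$ commutes with every $t_j$ by \eqref{drel-def-Y10}, the element $P(x_1)$ commutes with each central idempotent $E_\mu$, so from the decomposition \eqref{direct-sum} one gets $I=\bigoplus_\mu E_\mu I$, with $E_\mu I$ equal to the two-sided ideal of the block $E_\mu\widehat{Y}_{r,n}$ generated by $E_\mu P(x_1)=\sum_{\text{Comp}(\chi)=\mu}E_\chi P(x_1)$. The heart of the matter is to evaluate $\Phi_\mu$ on these generators. Taking $w=1$ and $\alpha=(k,0,\ldots,0)$ in \eqref{Phi-mu} gives $\Phi_\mu(E_\chi x_1^{k})=\bm{1}_{\chi,\chi}\,\bar{x}_{\pi_\chi^{-1}(1)}^{\,k}$, and therefore $\Phi_\mu(E_\chi P(x_1))=\bm{1}_{\chi,\chi}\,P(\bar{x}_{\pi_\chi^{-1}(1)})$. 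The key structural point is to locate $\pi_\chi^{-1}(1)$: since $\pi_\chi$ is the minimal-length representative of its coset in $\mathfrak{S}_n/\mathfrak{S}^{\mu}$, it is increasing along each block of $\mathfrak{S}^{\mu}$, so the position mapped to the smallest value $1$ must be the first index of some block, i.e. $\pi_\chi^{-1}(1)=\mu_1+\cdots+\mu_{a-1}+1$ for a suitable $a$. By \eqref{chi-mu} and \eqref{chi0-mu} this block is exactly the one with $\chi(t_1)=\zeta_a$, so $P(\bar{x}_{\pi_\chi^{-1}(1)})$ is precisely the $a$-th generator listed in \eqref{elements}. Thus $\Phi_\mu(E_\mu P(x_1))$ is the diagonal matrix whose $(\chi,\chi)$-entry is the cyclotomic generator attached to the block determined by $\chi(t_1)$.

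To conclude, I would invoke the standard description of ideals in matrix algebras: every two-sided ideal of $\text{Mat}_m(A)$ has the form $\text{Mat}_m(\mathfrak{a})$ for a two-sided ideal $\mathfrak{a}\trianglelefteq A$, and the ideal generated by a family of matrices equals $\text{Mat}_m$ of the ideal of $A$ generated by all of their entries. As $\chi$ runs over the characters with $\text{Comp}(\chi)=\mu$, the value $\chi(t_1)$ ranges over all $\zeta_a$ indexing the non-empty blocks, so the diagonal entries computed above generate exactly $\mathcal{I}_\mu$. Hence $\Phi_\mu(E_\mu I)=\text{Mat}_{m_\mu}(\mathcal{I}_\mu)$, and summing over $\mu$ yields the required ideal identity.

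I expect the main obstacle to be exactly the identification $\pi_\chi^{-1}(1)=\mu_1+\cdots+\mu_{a-1}+1$ with the correct block, i.e. the bookkeeping that converts the single cyclotomic relation imposed on $x_1$ into the $r$ relations \eqref{elements}, one per tensor factor. Some care is also needed with empty blocks, where the corresponding $\zeta_a$ does not occur among the $\chi$ and the tensor factor $\widehat{H}_{\mu_a}$ degenerates to $\mathbb{K}$, so that \eqref{elements} is effectively read over the non-empty blocks only; once this is handled, the descent to quotients is formal.
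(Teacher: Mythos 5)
Your proposal is correct and follows exactly the route the paper intends: the paper's proof of Theorem \ref{isomorphsim-theorem2} simply defers to [PA, Corollary 3.2], which is precisely this descent of $\Phi_{r,n}$ to the cyclotomic quotients via the computation $\Phi_{\mu}(E_{\chi}P(x_1))=\bm{1}_{\chi,\chi}P(\bar{x}_{\pi_{\chi}^{-1}(1)})$ and the matching of $\pi_{\chi}^{-1}(1)$ with the first index of the block determined by $\chi(t_1)$. Your writeup in fact supplies the details (the ideal identity, the location of $\pi_{\chi}^{-1}(1)$ via minimality of the coset representative, and the caveat about empty blocks) that the paper leaves implicit.
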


\begin{remark}\label{remark2}
Rostam [Ro, Theorem 5.15] has established an isomorphism between the degenerate cyclotomic Yokonuma-Hecke algebra and some cyclotomic quiver Hecke algebra associated to a quiver given by disjoint copies of cyclic quivers. In [Ro, Theorem 6.30], he has proved a general isomorphism theorem on cyclotomic quiver Hecke algebras, where the associated quiver is provided by a disjoint union of full subquivers. Combined with the isomorphism between the degenerate cyclotomic Hecke algebra and the cyclotomic quiver Hecke algebra associated to a cyclic quiver established by Brundan and Kleshchev [BK2, Theorem 1.1], it concludes that we also obtain an isomorphism between the degenerate cyclotomic Yokonuma-Hecke algebra and a direct sum of matrix algebras over degenerate cyclotomic Hecke algebras.

By an argument analogous to the proof of [Ro, Theorem 6.34], we see that the isomorphism obtained above coincides with the one in Theorem \ref{isomorphsim-theorem2}. That is, in our situation, the analogues of [Ro, Theorems 6.34 and 6.35] also hold.
\end{remark}

\section{Applications}

In this section we present several applications of the isomorphism theorems established in the preceding section.

\subsection{Simple modules}

By [La, Theorem 17.20], an algebra $S$ and a matrix algebra $R=\text{M}_{n}(S)$ over $S$ (for any fixed $n\geq 1$) are Morita equivalent. For a finite dimensional linear space $V,$ we denote by $V^{(n)}$ the space of $n$-tuples $(v_1,\ldots,v_n)$ with each $v_i\in V$ $(1\leq i\leq n).$

Thus, by Theorem \ref{isomorphsim-theorem1}, any simple $\widehat{Y}_{r,n}$-module is of the form \[(M_1\otimes\cdots\otimes M_r)^{(m_{\mu})},\]
where $\mu=(\mu_1,\ldots,\mu_r)\in \mathcal{C}_{r,n}$ and each $M_k$ is a simple $\widehat{H}_{\mu_k}$-module for $1\leq k\leq r.$ Therefore, we recover the modular representation theory of $\widehat{Y}_{r,n}$ established in [WW, Theorem 4.4].

Similarly, by Theorem \ref{isomorphsim-theorem2}, any simple $Y_{r, n}^{d}$-module is of the form \[(\overline{M}_1\otimes\cdots\otimes \overline{M}_r)^{(m_{\mu})},\]
where $\mu=(\mu_1,\ldots,\mu_r)\in \mathcal{C}_{r,n}$ and each $\overline{M}_k$ is a simple $H_{\mu_{k}}^{d}$-module for $1\leq k\leq r.$ Therefore, we recover the modular representation theory of $Y_{r, n}^{d}$ established in [WW, Theorem 5.12].

By Theorem 3.4, $Y_{r, n}^{d}$ is split semisimple if and only if for all $\mu\in \mathcal{C}_{r,n}$, the algebra $H^{\mu}$ is split semisimple. By [AMR, Theorem 6.11], this happens if and only if
\begin{equation}\label{semisimple-condition}
n!\prod_{1\leq i< j\leq d}\prod_{-n< l< n}(l+v_i-v_j)\neq 0.
\end{equation}

Since the tensor product of two cellular algebras is still a cellular algebra and the degenerate cyclotomic Hecke algebra $H_{n}^{d}$ is cellular by [AMR, Theorem 6.3], we get that $Y_{r, n}^{d}$ is a cellular algebra by Theorem \ref{isomorphsim-theorem2}.

\subsection{Schur elements}
In this subsection, we assume that $Y_{r, n}^{d}$ is split semisimple, that is, \eqref{semisimple-condition} is satisfied. We identify $H^{\mu}$ with $H_{\mu_{1}}^{d}\otimes\cdots\otimes H_{\mu_{r}}^{d}$. Recall the symmetrizing form defined in \eqref{symme-form}, which we can use to define a symmetrizing form $\tau^{\mu}$ on $H^{\mu}$ by
\begin{align}\label{symme-from1}
&\tau^{\mu}(\bar{x}_{1}^{i_1}\cdots \bar{x}_{\mu_1}^{i_{\mu_1}}\bar{w_1}\otimes\cdots\otimes \bar{x}_{\mu_1+\cdots+\mu_{r-1}+1}^{i_{\mu_1+\cdots+\mu_{r-1}+1}}\cdots \bar{x}_{n}^{i_{n}}\bar{w_r})\notag\\
=&\tau_{\mu_1}(\bar{x}_{1}^{i_1}\cdots \bar{x}_{\mu_1}^{i_{\mu_1}}\bar{w_1})\cdots \tau_{\mu_r}(\bar{x}_{\mu_1+\cdots+\mu_{r-1}+1}^{i_{\mu_1+\cdots+\mu_{r-1}+1}}\cdots \bar{x}_{n}^{i_{n}}\bar{w_r}).
\end{align}

For any $n\geq1,$ $\lambda=(\lambda_{1},\ldots,\lambda_{k})$ is called a partition of $n$ if it is a finite sequence of non-increasing nonnegative integers whose sum is $n.$ We write $\lambda\vdash n$ if $\lambda$ is a partition of $n,$ and we define $|\lambda| :=n$. A $d$-partition of $n$ is an ordered $d$-tuple $\bm{\lambda}=(\lambda^{(1)},\lambda^{(2)},\ldots,\lambda^{(d)})$ of partitions $\lambda^{(k)}$ such that $|\bm{\lambda}| :=\sum_{k=1}^{d}|\lambda^{(k)}|=n.$

It is known that the simple modules of the split semisimple algebra $H_{n}^{d}$ are labelled by $d$-partitions of $n.$ For $\bm{\lambda}$ a $d$-partition of $n$, let $\overline{M}_{\bm{\lambda}}$ be the corresponding simple module of $H_{n}^{d}.$ We denote by $s_{\bm{\lambda}} :=s_{\overline{M}_{\bm{\lambda}}}$ the Schur element of $\overline{M}_{\bm{\lambda}}$ associated to $\tau_{n},$ which has been explicitly calculated in [Z1, Theorem 4.2 and Z2, Theorem 5.5].

Let $\mu\in \mathcal{C}_{r,n}$ and let $\underline{\bm{\lambda}}=(\bm{\lambda}^{1},\ldots,\bm{\lambda}^{r})$ be an $r$-tuple of $d$-partitions such that $\mu=(|\bm{\lambda}^{1}|,\ldots,|\bm{\lambda}^{r}|)$. Set $\overline{M}_{\underline{\bm{\lambda}}} :=\overline{M}_{\bm{\lambda}^{1}}\otimes\cdots\otimes\overline{M}_{\bm{\lambda}^{r}}.$ We see that $\overline{M}_{\underline{\bm{\lambda}}}$ is a simple $H^{\mu}$-module. Moreover, by \eqref{symme-from1}, the Schur element $s_{\underline{\bm{\lambda}}}$ of $\overline{M}_{\underline{\bm{\lambda}}}$ associated to $\tau^{\mu}$ is given by
\begin{align}\label{symme-from2}
s_{\underline{\bm{\lambda}}}=s_{\bm{\lambda}^{1}}\cdots s_{\bm{\lambda}^{r}}.
\end{align}

Now let us consider the algebra $\bigoplus_{\mu\in \mathcal{C}_{r,n}}\text{Mat}_{m_{\mu}}(H^{\mu}).$ By [JaPA, Lemma 4.4(i)], we obtain a symmetrizing form on it, which is given by $\bigoplus_{\mu\in \mathcal{C}_{r,n}}\tau^{\mu}\circ \text{Tr}_{\text{Mat}_{m_{\mu}}}$. Moreover, by [JaPA, Lemma 4.4(ii)], the associated Schur element of the simple module, which is indexed by an $r$-tuple of $d$-partitions of $n,$ is given by the formula \eqref{symme-from2}.

Finally, let us consider the split semisimple degenerate cyclotomic Yokonuma-Hecke algebra $Y_{r, n}^{d}$. By the discussion in Subsection 4.1, we see that the simple $Y_{r, n}^{d}$-modules are indexed by the set of $r$-tuples of $d$-partitions of $n$. Assume that $\underline{\bm{\lambda}}=(\bm{\lambda}^{1},\ldots,\bm{\lambda}^{r})$ is an $r$-tuple of $d$-partitions of $n.$ By Theorem \ref{isomorphsim-theorem2} and the preceding discussion, we obtain naturally a symmetrizing form on $Y_{r, n}^{d},$ which is given by
\begin{align}\label{symme-from3}
\rho_{n} :=\bigoplus_{\mu\in \mathcal{C}_{r,n}}\tau^{\mu}\circ \text{Tr}_{\text{Mat}_{m_{\mu}}}\circ \bar{\Phi}_{\mu},
\end{align}
where $\bar{\Phi}_{\mu}$ is the induced linear map on $E_{\mu}Y_{r, n}^{d}$ by $\Phi_{\mu}$ defined in \eqref{Phi-mu}. And moreover, the associated Schur element of the simple $Y_{r, n}^{d}$-module, which is indexed by $\underline{\bm{\lambda}},$ is given by the formula \eqref{symme-from2}.

\subsection{Alternative formula for $\rho_{n}$}

We consider the following linear form $\hat{\rho}_{n}:Y_{r, n}^{d}\rightarrow \mathbb{K}$ on $Y_{r, n}^{d}$:
\begin{align}\label{symme-form4}
&\hat{\rho}_{n}(t_{1}^{s_1}\cdots t_{n}^{s_{n}}x_{1}^{r_1}\cdots x_{n}^{r_{n}}f_{w})\notag\\
=&
\begin{cases}
r^{n} & \text{if } r_{1}=\cdots=r_{n}=d-1, s_1\equiv\cdots\equiv s_{n}\equiv 0~ (\text{mod }r)\text{ and }w=1,
\\
0 & \text{otherwise. }
\end{cases}
\end{align}
It turns out that $\hat{\rho}_{n}$ actually coincides with the natural symmetrizing from $\rho_{n}.$
\begin{proposition}\label{coincide}
The form $\hat{\rho}_{n}$ coincides with the symmetrizing from $\rho_{n}$ on $Y_{r, n}^{d}.$
\end{proposition}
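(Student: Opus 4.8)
The plan is to show that the two $\mathbb{K}$-linear forms $\rho_n$ and $\hat{\rho}_n$ take the same value on a single convenient basis of $Y_{r,n}^{d}$; since both are linear, agreement on a basis forces them to coincide, and in particular we do not even need to invoke that either one is a symmetrizing form. The natural basis to use is $\{E_{\chi}x^{\beta}f_{w}\}$ with $\chi\in\text{Irr}(\mathcal{T}_{r,n})$, $0\leq\beta_i\leq d-1$ and $w\in\mathfrak{S}_n$, because $\rho_n$ is defined through the isomorphism of Theorem \ref{isomorphsim-theorem2}, whose formula \eqref{Phi-mu} is written precisely on elements of this shape.

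First I would compute $\rho_n(E_{\chi}x^{\beta}f_{w})$. Such an element lies entirely in the block $E_{\mu}Y_{r,n}^{d}$ with $\mu=\text{Comp}(\chi)$, so applying $\bar{\Phi}_{\mu}$ via \eqref{Phi-mu} produces the matrix $\bm{1}_{\chi,\,w^{-1}(\chi)}$ carrying the single entry $\bar{x}^{\pi_{\chi}^{-1}(\beta)}\,\overline{\pi_{\chi}^{-1}w\pi_{w^{-1}(\chi)}}$ in $H^{\mu}$. The trace $\text{Tr}_{\text{Mat}_{m_{\mu}}}$ annihilates this unless the matrix is diagonal, i.e. unless $w(\chi)=\chi$; in that case $\pi_{w^{-1}(\chi)}=\pi_{\chi}$ and the surviving diagonal entry is $\bar{x}^{\pi_{\chi}^{-1}(\beta)}\,\overline{\pi_{\chi}^{-1}w\pi_{\chi}}$. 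Finally $\tau^{\mu}$, as defined in \eqref{symme-from1}, returns $1$ exactly when every $\bar{x}$-exponent equals $d-1$ and the permutation part is trivial. Since the constant vector $(d-1,\ldots,d-1)$ is fixed by every permutation, $\pi_{\chi}^{-1}(\beta)=(d-1,\ldots,d-1)$ holds iff $\beta=(d-1,\ldots,d-1)$, while $\pi_{\chi}^{-1}w\pi_{\chi}=1$ holds iff $w=1$ (which also subsumes the diagonality requirement). Hence
\[
\rho_n(E_{\chi}x^{\beta}f_{w})=\begin{cases}1 & \text{if } \beta=(d-1,\ldots,d-1)\text{ and }w=1,\\ 0 & \text{otherwise.}\end{cases}
\]

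Next I would compute $\hat{\rho}_n(E_{\chi}x^{\beta}f_{w})$ directly from \eqref{symme-form4}. Expanding the idempotent via \eqref{idempotents} and commuting the $t_i$ past the $x_i$ by \eqref{drel-def-Y10}, one obtains $E_{\chi}x^{\beta}f_{w}=r^{-n}\sum_{0\leq s_1,\ldots,s_n\leq r-1}\big(\prod_i\chi(t_i)^{s_i}\big)\,t_1^{-s_1}\cdots t_n^{-s_n}x^{\beta}f_{w}$, already a linear combination of PBW monomials. Applying $\hat{\rho}_n$ term by term, the congruence $-s_i\equiv 0\ (\mathrm{mod}\ r)$ forces $s=0$, after which the remaining conditions require $\beta=(d-1,\ldots,d-1)$ and $w=1$, and on that single surviving monomial $\hat{\rho}_n$ equals $r^{n}$. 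Thus $\hat{\rho}_n(E_{\chi}x^{\beta}f_{w})=r^{-n}\cdot r^{n}=1$ under exactly the same constraints, and $0$ otherwise; comparing with the display above gives $\hat{\rho}_n=\rho_n$.

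I expect the only genuine bookkeeping to sit in the first step: one must confirm that the matrix-unit constraint together with the $\tau^{\mu}$ constraint collapses exactly to ``$w=1$ and $\beta$ is the top monomial'', and in particular that the conjugated permutation $\pi_{\chi}^{-1}w\pi_{\chi}$ and the permuted exponent $\pi_{\chi}^{-1}(\beta)$ introduce no residual dependence on $\chi$ in the final value. Both points rest on the permutation-invariance of the constant vector $(d-1,\ldots,d-1)$ and on conjugation by $\pi_{\chi}$ being a group automorphism, so no real difficulty arises. The normalization factor $r^{n}$ built into \eqref{symme-form4} is exactly what cancels the factor $r^{-n}$ coming from the idempotent $E_{\chi}$, which is why the two forms match on the nose rather than up to a scalar.
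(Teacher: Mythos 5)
Your proposal is correct and follows essentially the same route as the paper's own proof: both evaluate $\rho_n$ and $\hat\rho_n$ on the basis $\{E_\chi x^\beta f_w\}$, expand the idempotent $E_\chi$ so that only the $s=0$ terms survive under $\hat\rho_n$ (cancelling the $r^{-n}$ against the normalization $r^n$), and for $\rho_n$ use the formula for $\Phi_\mu$ together with the facts that the matrix trace forces $w^{-1}(\chi)=\chi$ and that $\pi_\chi^{-1}w\pi_\chi=1$ iff $w=1$. No changes needed.
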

\begin{proof}
It suffices to show that the two forms take the same value on the basis given in \eqref{PBW-basis2}. Fix $\mu\in \mathcal{C}_{r,n}$ and some $\chi$ such that $\text{Comp}(\chi)=\mu$, $\beta=(\beta_1,\ldots,\beta_{n})$ with each $0\leq \beta_i\leq d-1$ and $w\in \mathfrak{S}_{n}$. We use \eqref{idempotents} to get that
\begin{align*}
\hat{\rho}_{n}(E_{\chi}x^{\beta}f_{w})&=\hat{\rho}_{n}\bigg(\Big(\prod_{1\leq i\leq n}\frac{1}{r}\sum_{0\leq s\leq r-1}\chi(t_{i})^{s}t_{i}^{-s}\Big)x_{1}^{\beta_1}\cdots x_{n}^{\beta_{n}}f_{w}\bigg)\\
&=\hat{\rho}_{n}\bigg(\Big(\prod_{1\leq i\leq n}\frac{1}{r}\Big)x_{1}^{\beta_1}\cdots x_{n}^{\beta_{n}}f_{w}\bigg)\\
&=
\begin{cases}
1 & \text{if } \beta_{1}=\cdots=\beta_{n}=d-1\text{ and }w=1,
\\
0 & \text{otherwise. }
\end{cases}
\end{align*}

On the other hand, by \eqref{symme-from3} we have
\begin{align*}
\rho_{n}(E_{\chi}x^{\beta}f_{w})&=\tau^{\mu}\circ \text{Tr}_{\text{Mat}_{m_{\mu}}}\circ \bar{\Phi}_{\mu}(E_{\chi}x^{\beta}f_{w})\\
&=\tau^{\mu}\circ \text{Tr}_{\text{Mat}_{m_{\mu}}}\big(\bm{1}_{\chi, w^{-1}(\chi)}\bar{x}^{\pi_{\chi}^{-1}(\beta)}\overline{\pi_{\chi}^{-1}w\pi_{w^{-1}(\chi)}}\big).
\end{align*}
We have $w^{-1}(\chi)=\chi$ if and only if $\pi_{\chi}^{-1}w\pi_{\chi}\in \mathfrak{S}^{\mu}$. By the equality above, \eqref{symme-from1} and the fact that $\pi_{\chi}^{-1}w\pi_{\chi}=1$ if and only if $w=1,$ we have
\begin{align*}
\rho_{n}(E_{\chi}x^{\beta}f_{w})&=\tau^{\mu}\big(\bar{x}^{\pi_{\chi}^{-1}(\beta)}\overline{\pi_{\chi}^{-1}w\pi_{\chi}}\big)\\
&=\begin{cases}
1 & \text{if } \beta_{1}=\cdots=\beta_{n}=d-1\text{ and }w=1,
\\
0 & \text{otherwise. }
\end{cases}
\end{align*}
We have proved this proposition.
\end{proof}
\begin{remark}\label{remark}
Proposition \ref{coincide} immediately implies that $\hat{\rho}_{n}$ is a non-degenerate trace on $Y_{r, n}^{d}.$ Thus, we have given another proof of Theorem \ref{symmetric-algebra}.
\end{remark}

\noindent{\bf Acknowledgements.}
The author is deeply indebted to Dr. S. Rostam for pointing out the fact that Theorem \ref{isomorphsim-theorem2} can be deduced from the results in the latest version of [Ro]. Many ideas of this paper originate from the reference [PA].



\end{document}